\documentclass[12pt]{article}
\usepackage{verbatim}
\usepackage{latexsym}
\usepackage{amsfonts}
\usepackage{amsmath}
\usepackage{amsthm}
\usepackage{tikz,subcaption}

\usetikzlibrary{calc,intersections, through,arrows,decorations.markings}

\newcommand{\east}[2]{\draw[->] (#1+0.1,#2) -- +(0.8,0);}
\newcommand{\seast}[2]{\draw[->] (#1+0.05,#2-0.05) -- +(0.9,-0.9);}
\newcommand{\neast}[2]{\draw[->] (#1+0.05,#2+0.05) -- +(0.9,0.9);}
\newcommand{\deast}[2]{\draw(#1+0.1,#2) -- +(0.5,0);\draw[densely dashed] (#1+0.6,#2) -- +(0.8,0);}
\newcommand{\ddeast}[2]{\draw[densely dashed] (#1-0.3,#2) -- +(0.6,0);\draw[->] (#1+0.3,#2) -- +(0.6,0);}
\newcommand{\dseast}[2]{\draw(#1+0.05,#2-0.05) -- +(0.6,-0.6);\draw[densely dashed] (#1+0.5,#2-0.5) -- +(0.5,-0.5) -- +(0.9,-0.5);}
\newcommand{\dneast}[2]{\draw(#1+0.05,#2+0.05) -- +(0.6,0.6);\draw[densely dashed] (#1+0.5,#2+0.5) -- +(0.5,0.5) -- +(0.9,0.5);}

\newcommand{\permrect}[2]{\filldraw[fill opacity=0.6] (#1+2.5,#2+1.5)--(#1+2.5,#2-1.5)--(#1-2.5,#2-1.5)--(#1-2.5,#2+1.5)--cycle;
\draw (#1,#2) node [vertex]{};}
\newcommand{\permrectangle}[2]{
\permrect{#1}{#2}
\permrect{#1+20}{#2}
\permrect{#1}{#2+20}
\permrect{#1+20}{#2+20}
\permrect{#1-20}{#2}
\permrect{#1-20}{#2-20}
\permrect{#1}{#2-20}
\permrect{#1}{#2-20}
\permrect{#1-20}{#2+20}
\permrect{#1+20}{#2-20}
}

\newtheorem{theorem}{Theorem}
\newtheorem{lemma}[theorem]{Lemma}

\begin{document}

\title{Permutations that separate close elements}
\author{Simon R. Blackburn\thanks{
Department of Mathematics,
Royal Holloway University of London,
Egham, Surrey TW20 0EZ, United Kingdom,
\texttt{s.blackburn@rhul.ac.uk}}}
\maketitle

\begin{abstract}
Let $n$ be a fixed integer with $n\geq 2$. For $i,j\in\mathbb{Z}_n$, define $||i,j||_n$ to be the distance between $i$ and $j$ when the elements of $\mathbb{Z}_n$ are written in a cycle. So  $||i,j||_n=\min\{(i-j)\bmod n,(j-i)\bmod n\}$. For positive integers $s$ and $k$, the permutation $\pi:\mathbb{Z}_n\rightarrow\mathbb{Z}_n$ is \emph{$(s,k)$-clash-free} if $||\pi(i),\pi(j)||_n\geq k$ whenever $||i,j||_n<s$ with $i\not=j$. So an $(s,k)$-clash-free permutation $\pi$ can be thought of as moving every close pair of elements of $\mathbb{Z}_n$ to a pair at large distance. More geometrically, the existence of an $(s,k)$-clash-free permutation is equivalent to the existence of a set of $n$ non-overlapping $s\times k$ rectangles on an $n\times n$ torus, whose centres have distinct integer $x$-coordinates and distinct integer $y$-coordinates.

For positive integers $n$ and $k$ with $k<n$, let $\sigma(n,k)$ be the largest value of $s$ such that an $(s,k)$-clash-free permutation on $\mathbb{Z}_n$ exists. In a recent paper, Mammoliti and Simpson conjectured that
\[
\lfloor (n-1)/k\rfloor-1\leq \sigma(n,k)\leq \lfloor (n-1)/k\rfloor
\]
for all integers $n$ and $k$ with $k<n$. The paper establishes this conjecture, by explicitly constructing an $(s,k)$-clash-free permutation on $\mathbb{Z}_n$ with $s=\lfloor (n-1)/k\rfloor-1$.
Indeed, this construction is used to establish a more general conjecture of Mammoliti and Simpson, where for some fixed integer $r$ we require every point on the torus to be contained in the interior of at most $r$ rectangles.
\end{abstract}

\newpage

\paragraph{Keywords:} Permutations; packing problems.

\paragraph{MSC2020:} 05A05, 05B30, 05B40.

\section{Introduction}
\label{sec:introduction}

Let $n$ be a fixed integer with $n\geq 2$. Define $||i,j||_n=\min\{(i-j)\bmod n,(j-i)\bmod n\}$ to be the distance between $i$ and $j$ when the elements of $\mathbb{Z}_n$ are written in a cycle. Let $s$ and $k$ be positive integers. For a permutation $\pi$ of $\mathbb{Z}_n$, we define an \emph{$(s,k)$-clash} to be a pair $(i,j)$ of distinct elements of $\mathbb{Z}_n$ such that $||i,j||_n<s$ and $||\pi(i),\pi(j)||_n<k$. A permutation is $\emph{$(s,k)$-clash-free}$ if it has no $(s,k)$-clashes. An $(s,k)$-clash-free permutation~$\pi$ can be thought of as being `chaotic' in the sense that pairs of distinct elements at distance less than $s$ are always mapped under $\pi$ to pairs at distance at least $k$. The notion of a clash-free permutation was introduced by Mammoliti and Simpson~\cite{MammolitiSimpson}.

If we draw a set of $s\times k$ rectangles on an $n\times n$ torus with centres at points $(i,\pi(i))$, we see that a permutation is $(s,k)$-clash-free if and only if the rectangles do not intersect. See Figure~\ref{fig:geometric} for an example of a $(5,3)$-clash-free permutation depicted in this way.

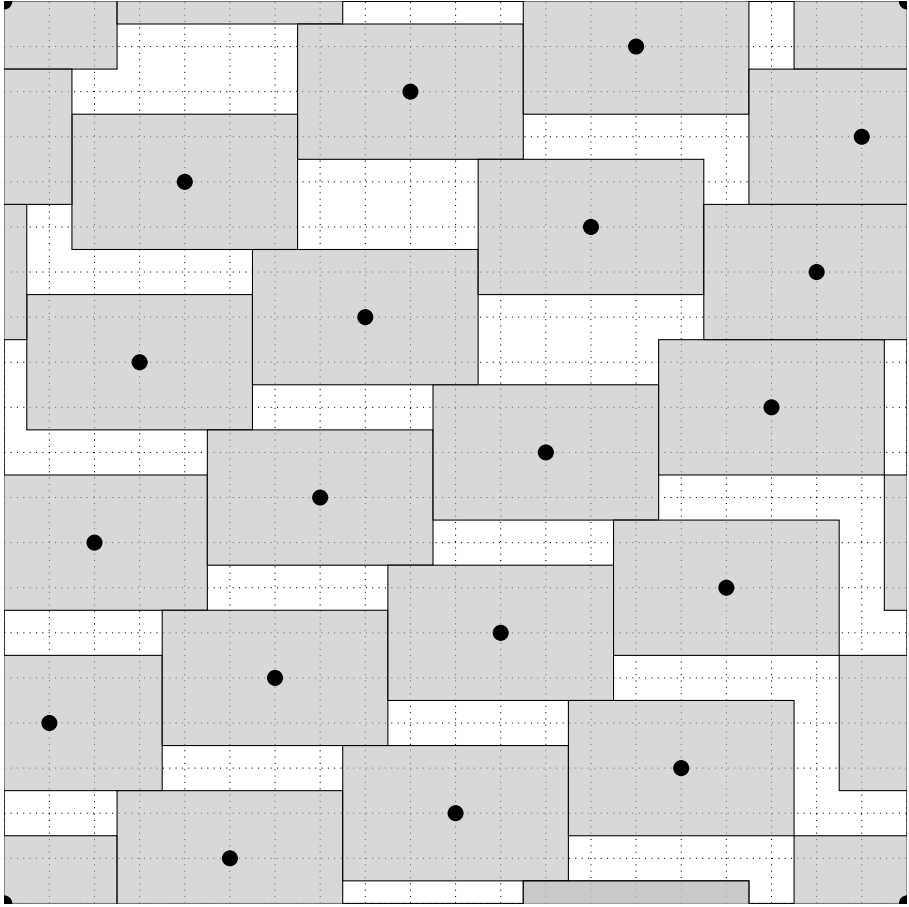
\begin{figure}
\begin{center}
\begin{tikzpicture}[fill=gray!50, scale=0.6,
vertex/.style={circle,inner sep=2,fill=black,draw}]

\clip (0,0) rectangle (20,20);

\draw[dotted] (0,0) grid (20,20);

\draw (0,0) rectangle (20,20);

\permrectangle{0}{0};
\permrectangle{1}{4};
\permrectangle{2}{8};
\permrectangle{3}{12};
\permrectangle{4}{16};
\permrectangle{5}{1};
\permrectangle{6}{5};
\permrectangle{7}{9};
\permrectangle{8}{13};
\permrectangle{9}{18};
\permrectangle{10}{2};
\permrectangle{11}{6};
\permrectangle{12}{10};
\permrectangle{13}{15};
\permrectangle{14}{19};
\permrectangle{15}{3};
\permrectangle{16}{7};
\permrectangle{17}{11};
\permrectangle{18}{14};
\permrectangle{19}{17};

\end{tikzpicture}
\end{center}
\caption{$5\times 3$ rectangles drawn on a $20\times 20$ torus, using the permutation $(\pi(0),\pi(1),\ldots ,\pi(19))$ of $\mathbb{Z}_{20}$ given by $(0,4,8,12,16,1,\ldots,11,14,17)$.}
\label{fig:geometric}
\end{figure}

We mention that `non-cyclic' variations of the problem, where we draw rectangles on a cylinder or a square, have also been considered~\cite{MammolitiSimpson}, as have generalisations of the $k=2$ case (known as cyclic matching sequencibility for graphs)~\cite{Alspach,BrualdiKiernan, KreherPastine}. 
We also mention papers by Bevan, Homberger and Tenner~\cite{BevanHomberger} and Blackburn, Homberger and Winkler~\cite{BlackburnHomberger}, which consider the related problem of packing diamonds rather than rectangles on a torus. Such packings are called \emph{permuted packings} in~\cite{BevanHomberger}. The problem of finding such packings can be rephrased as finding permutations $\pi$ of $\mathbb{Z}_n$ such that the distances between the points $(i,\pi(i))$ are large in the Manhattan metric (the $\ell_1$-metric). 

For integers $u$ and $v$ with $u\leq v$, we write $[u,v]$ for the set of all integers~$i$ such that $u\leq i\leq v$. We say that the \emph{length} of the interval is $v-u$. For integers $n$ and $k$, define $\sigma(n,k)$ to be the largest value of $s\in [1,n]$ such that an $(s,k)$-clash-free permutation of $\mathbb{Z}_n$ exists. It is easy to see directly from the definition of an $(s,k)$-clash that $\sigma(n,k)=1$ when $k\geq n$, so we may assume that $k<n$. We prove the following theorem:

\begin{theorem}
\label{thm:MSconjecture1}
Let $n$ and $k$ be integers with $1\leq k<n$. Define $\sigma(n,k)$ as above. Then
\[
\lfloor (n-1)/k\rfloor-1 \leq \sigma(n,k)\leq \lfloor (n-1)/k\rfloor.
\]
\end{theorem}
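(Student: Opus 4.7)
The plan is to establish the upper bound via a pigeonhole argument with a sharpening step, and the lower bound via an explicit construction.

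For the upper bound, suppose $\pi$ is an $(s,k)$-clash-free permutation of $\mathbb{Z}_n$. The $s$ consecutive elements $0,1,\ldots,s-1$ are pairwise at cyclic distance strictly less than $s$, so their $s$ images are pairwise at cyclic distance at least $k$ on the cycle $\mathbb{Z}_n$. Summing the $s$ arcs between consecutive images (read cyclically) gives $sk\leq n$. To sharpen this to $sk\leq n-1$, suppose $sk=n$: then each arc has length exactly $k$, forcing the image set to be a coset of $k\mathbb{Z}_n$. The same reasoning applied to the shifted window $\{1,2,\ldots,s\}$ gives another such coset, and since these share $s-1\geq 1$ elements they must coincide. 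Hence $\pi(\{0,\ldots,s-1\})=\pi(\{1,\ldots,s\})$, and by injectivity $\{0,\ldots,s-1\}=\{1,\ldots,s\}$ as subsets of $\mathbb{Z}_n$, forcing $s\equiv 0\pmod n$. Together with $sk=n$ this forces $k=1$, which is a degenerate case to handle separately. So, for $k\geq 2$, $sk\leq n-1$, giving $s\leq\lfloor(n-1)/k\rfloor$.

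For the lower bound with $s=\lfloor(n-1)/k\rfloor-1$, I would first treat the easy case $\gcd(k,n)=1$: then $\pi(i)=ki\bmod n$ is a permutation, and for $|i-j|\in\{1,\ldots,s-1\}$ the cyclic distance between $\pi(i)$ and $\pi(j)$ equals $\min(|i-j|k,\,n-|i-j|k)\geq k$, using $(s-1)k\leq n-1-2k<n-k$. The main obstacle is handling general $n,k$ uniformly, especially when $\gcd(k,n)>1$ and no multiplicative ``slope'' $q$ simultaneously satisfies $\gcd(q,n)=1$ and sends $\{q,2q,\ldots,(s-1)q\}\bmod n$ into $[k,n-k]$. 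In that setting I expect to use a block-based construction in the spirit of Figure~\ref{fig:geometric}: partition $\mathbb{Z}_n$ into $m=s+1$ arcs of length $k$ or $k+1$ (which fits because $mk\leq n-1<(m+1)k$), and define $\pi$ so that each block of $s$ consecutive inputs has its images in $s$ of these $m$ arcs in a cyclically shifting pattern. The crux will be verifying the clash-free condition on every sliding window of size $s$, including those straddling block boundaries and the global wrap-around, via a careful case analysis tracking which arcs are used.
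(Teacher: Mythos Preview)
Your upper-bound argument and the $\gcd(k,n)=1$ warm-up for the lower bound are both correct; the paper itself does not reprove the upper bound, simply citing \cite[Lemma~2.3]{MammolitiSimpson}.

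The general lower-bound plan, however, has a concrete gap. You propose to partition $\mathbb{Z}_n$ into $m=s+1$ arcs of length $k$ or $k+1$, citing $mk\leq n-1<(m+1)k$. But those inequalities only yield $1\leq n-mk\leq k$; for such a partition to exist you also need $n-mk\leq m$, which fails whenever the excess $n-mk$ exceeds $s+1$. For $n=20$, $k=5$ one gets $s=\lfloor 19/5\rfloor-1=2$, $m=3$, and $n-mk=5>3$: three arcs of length $5$ or $6$ cover at most $18$ points, not $20$. So in the regime $k>s+1$ (roughly $k>\sqrt{n}$) your partition simply does not exist, and the remainder of the outline (``cyclically shifting pattern'', ``careful case analysis'') is not specific enough to see how to repair it. The paper avoids both this difficulty and your case split on $\gcd(k,n)$ by passing to the inverse permutation: rather than build an $(s,k)$-clash-free $\pi$ with approximate slope $k$, it builds a $(k,s)$-clash-free $\pi$ with approximate slope $s+1$ and uses that $\pi^{-1}$ is then $(s,k)$-clash-free. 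With $d=\gcd(s+1,n)$ and $\ell=n/d$, the elements of $\mathbb{Z}_n$ are laid out in a $d\times\ell$ array $A_{i,j}=i+j(s+1)\bmod n$, and $\pi$ is defined by reading $A$ along a single explicit cycle that moves east (adding $s+1$) except for occasional northeast or southeast steps (adding $s$ or $s+2$) that change rows. The key inequality $k(s+1)\leq n-1$ then shows that any $z<k$ consecutive steps change the entry by some $\delta$ with $s\leq\delta\leq n-1-s$, so $\|\pi(i),\pi(j)\|_n\geq s$ whenever $\|i,j\|_n<k$. All the boundary and wrap-around issues you anticipate collapse to the single structural fact that southeast steps on the cycle are separated by at least $\ell-2\geq k-1$ east steps, so any window of fewer than $k$ steps contains at most one of them.
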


We prove this theorem in Section~\ref{sec:proof1} below. The theorem answers in the affirmative a conjecture of Mammoliti and Simpson~\cite[Conjecture~3.9]{MammolitiSimpson}. We remark that the two cases $\sigma(n,k)=\lfloor (n-1)/k\rfloor$ and $\sigma(n,k)=\lfloor (n-1)/k\rfloor-1$ both occur: A computer search in~\cite{MammolitiSimpson} showed that $\sigma(25,4)=6$ and $\sigma(26,4)=5$. These examples also show that $\sigma(n,k)$ is not monotonically increasing in $n$. The key contribution of this paper is to establish the lower bound in Theorem~\ref{thm:MSconjecture1} by explicitly constructing an $(s,k)$-clash-free permutation with $s=\lfloor (n-1)/k\rfloor-1$; the upper bound is~\cite[Lemma~2.3]{MammolitiSimpson}. 

The construction in the proof of Theorem~\ref{thm:MSconjecture1} can also be used in a generalisation (due to Mammoliti and Simpson) of the situation above, where we consider $r+1$-subsets rather than pairs. In more detail, for a subset $X\subseteq \mathbb{Z}_n$, we define $||X||_n$ to be the minimum length of an interval containing $X$. So, writing $x+[0,t]\bmod n$ for the subset $\{x,x+1,\ldots,x+t\}\subseteq\mathbb{Z}_n$, 
\[
||X||_n =\min\{t\in \mathbb{Z}\mid X\subseteq x+[0,t]\bmod n\text{ for some }x\in\mathbb{Z}_n\}.
\]
For a permutation $\pi$ of $\mathbb{Z}_n$, we define an \emph{$(s,k,r)$-clash} to be a subset $X\subseteq \mathbb{Z}_n$ of cardinality $r+1$ such that $||X||_n<s$ and $||\pi(X)||_n<k$. The permutation~$\pi$ is \emph{$(s,k,r)$-clash-free} if it has no $(s,k,r)$-clashes $X$. Geometrically, we are asking for a collection of $s\times k$ rectangles in the $n\times n$ torus with centres $(i,\pi(i))$ such that each point lies in the interior of no more than $r$ rectangles.  When $r=1$ we are in the situation we considered above.

For fixed integers $n$, $k$ and $r$, we define $\sigma(n,k,r)$ to be the largest value $s\in [0,n]$ such that an $(s,k,r)$-clash-free permutation of $\mathbb {Z}_n$ exists. When $r\geq n$ every permutation is $(s,k,r)$-clash-free, and so $\sigma(n,k,r)=n$. We have already dealt with the situation when $r=1$ above. So we may assume that $1<r<n$.  Similarly, when $k\geq n$ it is not hard to see that we have $\sigma(n,k,r)=r$, and when $r\geq k$ we have $\sigma(n,k,r)=n$ (the key fact for both situations being that every $r+1$-subset $X\subseteq\mathbb{Z}_n$ has $r\leq ||X||_n< n$). So we may assume that $1<r<k< n$.  We prove the following theorem in Section~\ref{sec:proof2} below:

\begin{theorem}
\label{thm:MSconjecture2}
Let $n$, $k$ and $r$ be integers such that $1<r<k< n$. Then
\[
\lfloor (rn-1)/k\rfloor-1 \leq \sigma(n,k,r)\leq \lfloor (rn-1)/k\rfloor.
\]
\end{theorem}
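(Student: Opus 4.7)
The proof of Theorem~\ref{thm:MSconjecture2} separates into the upper and lower bounds.

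For the upper bound $\sigma(n,k,r)\le\lfloor(rn-1)/k\rfloor$, I would extend the pigeonhole argument used for the $r=1$ case. Let $\pi$ be an $(s,k,r)$-clash-free permutation and $Y:=\pi([0,s-1])$. Every $(r+1)$-subset $X\subseteq[0,s-1]$ has $||X||_n\le s-1<s$, so $||\pi(X)||_n\ge k$; hence no window of $k$ consecutive elements of $\mathbb{Z}_n$ contains more than $r$ elements of $Y$, and double-counting gives $ks\le rn$. To strengthen this to a strict inequality, equality $ks=rn$ would force every $k$-window to contain exactly $r$ elements of $Y$, giving $Y+k=Y$ and making $Y$ a union of cosets of $\langle k\rangle\le\mathbb{Z}_n$. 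Applying the same to the shifted interval $[1,s]$ produces $Y'=\pi([1,s])$, also a union of cosets, but $Y\triangle Y'=\{\pi(0),\pi(s)\}$ has size $2$; since a symmetric difference of unions of $\langle k\rangle$-cosets is itself a union of cosets, this forces $|\langle k\rangle|=2$, and then $\pi(s)=\pi(0)+n/2$ must lie in $Y$ (the partner of $\pi(0)$), contradicting $\pi(s)\notin Y$.

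For the lower bound $\sigma(n,k,r)\ge\lfloor(rn-1)/k\rfloor-1$, the plan is to lift to the larger cyclic group $\mathbb{Z}_{rn}$. Applying Theorem~\ref{thm:MSconjecture1} with $n$ replaced by $rn$ yields a permutation $\pi_0$ of $\mathbb{Z}_{rn}$ that is $(s,k)$-clash-free with $s=\lfloor(rn-1)/k\rfloor-1$. If $\pi_0$ satisfies the compatibility condition $\pi_0(j+n)\equiv\pi_0(j)\pmod n$ for all $j$---automatic for a multiplicative $\pi_0(j)\equiv cj\pmod{rn}$ with $\gcd(c,rn)=1$---then $\pi(i):=\pi_0(i)\bmod n$ descends to a well-defined permutation of $\mathbb{Z}_n$. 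To see that $\pi$ is $(s,k,r)$-clash-free, consider any $(r+1)$-subset $X\subseteq\mathbb{Z}_n$ with $||X||_n<s$, and choose a lift $\tilde X\subseteq\mathbb{Z}_{rn}$ contained in a single length-$s$ interval (possible since $s<n$); the $(s,k)$-clash-free property of $\pi_0$ renders $\pi_0(\tilde X)$ a set of $r+1$ points pairwise at $\mathbb{Z}_{rn}$-distance $\ge k$. If $\pi(X)$ lay in a $k$-window $J$ of $\mathbb{Z}_n$, then $\pi_0(\tilde X)$ would lie in the preimage $\bigcup_{b=0}^{r-1}(J+bn)$, a disjoint union of $r$ length-$k$ intervals in $\mathbb{Z}_{rn}$; by pigeonhole two of the $r+1$ points of $\pi_0(\tilde X)$ would share an interval and so be at $\mathbb{Z}_{rn}$-distance strictly less than $k$, a contradiction.

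The main obstacle is arranging the Theorem~\ref{thm:MSconjecture1} construction on $\mathbb{Z}_{rn}$ to be $n$-compatible. That construction includes non-multiplicative correction terms (visible as the irregularities in Figure~\ref{fig:geometric}), so a direct appeal to the $r=1$ case is not immediate. I expect the bulk of Section~\ref{sec:proof2} to consist of revisiting the Theorem~\ref{thm:MSconjecture1} construction and showing that its parameters and corrections can be chosen so that the resulting permutation of $\mathbb{Z}_{rn}$ descends to a valid permutation of $\mathbb{Z}_n$ while retaining the $(s,k)$-clash-free property, with a case analysis on the divisibility relationships between $n$, $k$ and $r$.
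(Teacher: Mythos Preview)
Your upper-bound argument is correct and self-contained; the paper simply cites \cite[Lemma~4.1]{MammolitiSimpson} for this, so your pigeonhole-plus-coset argument is a nice independent proof.

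For the lower bound, however, your plan diverges from the paper and leaves exactly the gap you yourself flag. You propose to apply Theorem~\ref{thm:MSconjecture1} on $\mathbb{Z}_{rn}$ and then push the resulting permutation down to $\mathbb{Z}_n$, which requires the construction on $\mathbb{Z}_{rn}$ to satisfy $\pi_0(j+n)\equiv\pi_0(j)\pmod n$. As you note, the Section~\ref{sec:proof1} construction is not multiplicative---the southeast/northeast corrections in the cycle through the matrix $A$ break any obvious $n$-periodicity---and you give no argument that the parameters can be massaged to achieve compatibility. Your expectation that ``the bulk of Section~\ref{sec:proof2}'' is devoted to this is incorrect: the paper never works in $\mathbb{Z}_{rn}$ at all.

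Instead, the paper reuses the \emph{same} permutation $\pi$ of $\mathbb{Z}_n$ built in Section~\ref{sec:proof1}, now with $s+1=\lfloor(rn-1)/k\rfloor$, and proves directly that it is $(k,s,r)$-clash-free (hence $\pi^{-1}$ is $(s,k,r)$-clash-free). The key observation is that for any $x$ the $k$ integer intervals $\sigma_i+[0,s-1]\subset\mathbb{Z}$ (where $\sigma_i=\pi(x+i)-\pi(x)$ lifted to $\mathbb{Z}$) are pairwise disjoint and all lie inside $[0,\,k(s+1)+d-3]$ with $d=\gcd(s+1,n)$. A short computation using $\gcd(s+1,n)\le\gcd(s+1,rn)=\gcd(\rho+1,s+1)\le\rho+1$ (where $rn-1=k(s+1)+\rho$) shows $k(s+1)+d-3\le rn-1$, so these disjoint intervals fit inside $[0,rn-1]$; reducing modulo $n$ is then $r$-to-one, so no residue is covered more than $r$ times, which is exactly the $(k,s,r)$-clash-free condition. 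This sidesteps the descent problem entirely: there is no lift, only a sharper count of how far the $k$ intervals can stretch before wrapping around $r$ times.
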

The theorem answers in the affirmative a conjecture of Mammoliti and Simpson~\cite[Conjecture~6.1]{MammolitiSimpson} (that generalises~\cite[Conjecture~3.9]{MammolitiSimpson}).

\section{A Proof of Theorem~\protect\ref{thm:MSconjecture1}}
\label{sec:proof1}

Mammoliti and Simpson~\cite[Lemma~2.3]{MammolitiSimpson} show that $\sigma(n,k)\leq \lfloor (n-1)/k\rfloor$. So it suffices to show that $\sigma(n,k)\geq \lfloor (n-1)/k\rfloor-1$.

Let $s=\lfloor (n-1)/k\rfloor-1$.
The theorem is trivially true when $s\leq 1$, as every permutation of $\mathbb{Z}_n$ is $(1,k)$-clash-free. So we may assume that $s\geq 2$. Similarly, since every permutation is $(s,1)$-clash-free, we may assume that $k\geq 2$.

To prove the theorem, it suffices to construct an $(s,k)$-clash-free permutation of $\mathbb{Z}_n$. In fact, we will construct a $(k,s)$-clash-free permutation $\pi$ of $\mathbb{Z}_n$. Since a pair $(i,j)$ is a $(k,s)$-clash for a permutation $\pi$ if and only if the pair $(\pi(i),\pi(j))$ is an $(s,k)$-clash for the permutation $\pi^{-1}$, we see that $\pi^{-1}$ is $(s,k)$-clash-free as required; see~\cite[Theorem~2.1]{MammolitiSimpson}.

Let $d=\gcd(s+1,n)$, and let $\ell$ be the additive order of $s+1$ in $\mathbb{Z}_n$. So $\ell = n/d$ and the additive group $\langle s+1\rangle$ generated by $s+1$ in $\mathbb{Z}_n$ consists of all elements $r\bmod n$ where $r$ is an integer such that $d$ divides $r$.

We see that $ks=k(\lfloor (n-1)/k\rfloor-1)\leq n-1-k$, and so
\begin{equation}
\label{eqn:ks}
k(s+1)\leq n-1.
\end{equation}
In particular, $i(s+1)\not\equiv0\bmod n$ for $i=1,2,\ldots ,k$, and hence $\ell> k$.

We define a $d\times \ell$ matrix $A$ over $\mathbb{Z}_n$ by setting the $(i,j)$ entry $A_{i,j}$ of $A$ to be $A_{i,j}=i+j(s+1)\bmod n$ for $0\leq i<d$ and $0\leq j<\ell$. An example of the matrix $A$ is given in Figure~\ref{fig:A}.

We will now show that every element of $\mathbb {Z}_n$ occurs exactly once in $A$. Row~$i$ of $A$ lists the $\ell$ elements of the coset $i+\langle s+1\rangle$. We claim these cosets are distinct. To see this, let $0\leq i<i'<d$. The difference between an element in row $i'$ and an element in row $i$ lies in the coset $(i'-i)+\langle s+1\rangle$. But $1\leq i'-i<d$, and so $i'-i$ is not a multiple of $d$. Hence $i'-i\notin\langle s+1\rangle$ and so the cosets listed in rows $i$ and $i'$ are distinct, as claimed. Thus the entries of the matrix $A$ are distinct. Since there are $d\ell=d(n/d)=n$ elements in $A$, every element of $\mathbb {Z}_n$ occurs exactly once in $A$, as required.

\begin{figure}
\[
\hspace{.37cm}
\left(
\begin{array}{ccccccccccccccccccc}
0&12&24&36&48&60&72&8&20&32&44&56&68&4&16&28&40&52&64\\
1&13&25&37&49&61&73&9&21&33&45&57&69&5&17&29&41&53&65\\
2&14&26&38&50&62&74&10&22&34&46&58&70&6&18&30&42&54&66\\
3&15&27&39&51&63&75&11&23&35&47&59&71&7&19&31&43&55&67
\end{array}
\right)
\]
\vspace{0.4cm}
\begin{tikzpicture}[fill=gray!50, scale=0.75]
\draw (-0.5,-0.5) -- (-0.5, 3.5);
\draw (0.5,-0.5) -- (0.5, 3.5);
\draw (1.5,-0.5) -- (1.5, 3.5);
\draw (2.5,-0.5) -- (2.5, 3.5);
\draw (3.5,-0.5) -- (3.5, 3.5);
\draw (4.5,-0.5) -- (4.5, 3.5);
\draw (5.5,-0.5) -- (5.5, 3.5);
\draw (6.5,-0.5) -- (6.5, 3.5);
\draw (7.5,-0.5) -- (7.5, 3.5);
\draw (8.5,-0.5) -- (8.5, 3.5);
\draw (9.5,-0.5) -- (9.5, 3.5);
\draw (10.5,-0.5) -- (10.5,3.5);
\draw (11.5,-0.5) -- (11.5, 3.5);
\draw (12.5,-0.5) -- (12.5, 3.5);
\draw (13.5,-0.5) -- (13.5, 3.5);
\draw (14.5,-0.5) -- (14.5, 3.5);
\draw (15.5,-0.5) -- (15.5, 3.5);
\draw (16.5,-0.5) -- (16.5, 3.5);
\draw (17.5,-0.5) -- (17.5, 3.5);
\draw (18.5,-0.5) -- (18.5, 3.5);

\draw (-0.5,-0.5) -- (18.5,-0.5);
\draw (-0.5,0.5) -- (18.5,0.5);
\draw (-0.5,1.5) -- (18.5,1.5);
\draw (-0.5,2.5) -- (18.5,2.5);
\draw (-0.5,3.5) -- (18.5,3.5);


\ddeast{-1}{3} \east{0}{3} \east{1}{3} \east{2}{3} \east{3}{3} \east{4}{3}\east{5}{3}\east{6}{3}\east{7}{3}\east{8}{3}\east{9}{3}\east{10}{3}\east{11}{3}\east{12}{3}\east{13}{3}\east{14}{3}\east{15}{3}\east{16}{3}\east{17}{3}\dseast{18}{3}
\ddeast{-1}{2} \east{0}{2} \east{1}{2} \east{2}{2} \east{3}{2}\east{4}{2}\east{5}{2}\east{6}{2}\east{7}{2}\east{8}{2}\east{9}{2}\east{10}{2}\east{11}{2}\east{12}{2}\east{13}{2}\east{14}{2}\east{15}{2}\east{16}{2}\seast{17}{2} \dneast{18}{2}
\ddeast{-1}{1} \east{0}{1} \east{1}{1} \east{2}{1}\east{3}{1}\east{4}{1}\east{5}{1}\east{6}{1}\east{7}{1}\east{8}{1}\east{9}{1}\east{10}{1}\east{11}{1}\east{12}{1}\east{13}{1}\east{14}{1}\east{15}{1}\seast{16}{1} \neast{17}{1} \deast{18}{1}
\ddeast{-1}{0} \east{0}{0} \east{1}{0}\east{2}{0} \east{3}{0} \east{4}{0} \east{5}{0} \east{6}{0} \east{7}{0} \east{8}{0} \east{9}{0} \east{10}{0} \east{11}{0} \east{12}{0} \east{13}{0} \east{14}{0} \east{15}{0} \neast{16}{0} \east{17}{0} \deast{18}{0}
\end{tikzpicture}
\caption{$A$ in the case $(n,k,s)=(76,6,11)$, and the cyclic ordering of the entries of $A$ we use. So $(\pi(0),\pi(1),\ldots,\pi(n-1))$ is defined as $(0,12,24,36,\ldots,52,64,1,13,25,\ldots,41,53,66,2,14,\ldots,19,31,43,54,65)$.}
\label{fig:A}
\end{figure}
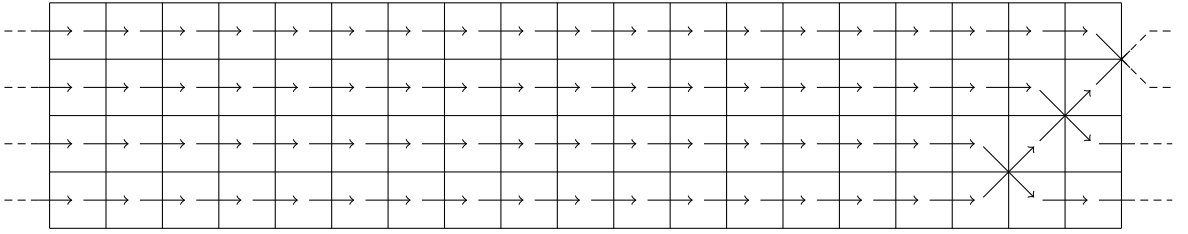

The following three properties of $A$ are easy to verify. First, moving one place east (cyclically), in $A$ always increases the entry by $s+1$:
\begin{equation}
\begin{split}
A_{i,(j+1)}-A_{i,j} &\equiv s+1\bmod n\text{ for }0\leq i<d\text{ and }0\leq j<\ell-1,\\
A_{i,0}-A_{i,(\ell-1)}&\equiv s+1\bmod n.
\end{split}\label{eqn:east}
\end{equation}
Secondly, moving southeast by one position (cylically), when not in the bottom row, increases the entry by $s+2$:
\begin{equation}
\begin{split}
A_{(i+1),(j+1)}-A_{i,j} &\equiv s+2\bmod n\text{ for }0\leq i<d-1\text{ and }0\leq j<\ell-1,\\
A_{(i+1),0}-A_{i,(\ell-1)}&\equiv s+2\bmod n\text{ for }0\leq i<d-1.
\end{split}\label{eqn:se}
\end{equation}
Finally, moving northeast by one position (cylically), when not in the top row, increases the entry by $s$:
\begin{equation}
\begin{split}
A_{(i-1),(j+1)}-A_{i,j} &\equiv s\bmod n\text{ for }1\leq i<d\text{ and }0\leq j<\ell-1,\\
A_{(i-1),0}-A_{i,(\ell-1)}&\equiv s\bmod n\text{ for }1\leq i<d.
\end{split}\label{eqn:ne}
\end{equation}

We order the entries of $A$ into a cycle as follows. We begin at the entry $A_{0,0}$. If we are currently at the entry $A_{i,j}$, we move east unless:
\begin{itemize}
\item $j\equiv (\ell-1)-i\bmod \ell$ and $i<d-1$, when we move south east;
\item $j\equiv (\ell-1)-i+1\bmod \ell$ and $i>0$, when we move north east.
\end{itemize}
Our cycle is well-defined, since $\ell>k\geq 2$. An example of this cycle is given in the lower part of Figure~\ref{fig:A}. When $d=1$ our array has a single row, and all our moves are east-moves. We note that southeast-moves are separated by at least $\ell-2$ east-moves (and the $d-1$ northeast-moves occur consecutively). Moreover the cycle visits all the entries of $A$. 

Define a function $\pi:\mathbb{Z}_n\rightarrow\mathbb{Z}_n$ by
\[
(\pi(0),\pi(1),\pi(2),\ldots,\pi(n-1))=(A_{0,0},A_{0,1},A_{0,2},\ldots,A_{1,\ell-1}),
\]
where the entries in $A$ on the right hand side are ordered using our cycle. Since every element of $\mathbb{Z}_n$ occurs in $A$, and our cycle visits all entries of $A$, we see that $\pi$ is a permutation.

We will now show that $\pi$ has no $(k,s)$-clashes. This is sufficient to establish the theorem. Let $z$ be a positive integer, with $z<k$. Let $i,j\in \mathbb{Z}_n$ be such that $||j-i||_n=z$. Without loss of generality, we may assume that the entry of $A$ corresponding to $\pi(j)$ can be reached by moving $z$ times along our cycle, starting at the entry corresponding to $\pi(i)$. Suppose these $z$ moves are made up of $a_0$ northeast-moves, $a_1$ east-moves and $a_2$ southeast moves, where $a_0+a_1+a_2=z$. The equations~\eqref{eqn:east},~\eqref{eqn:se} and~\eqref{eqn:ne} show that $\pi(j)\equiv\pi(i)+\delta\bmod n$, where $\delta=zs+a_1+2a_2$. Since $z$ is positive, $\delta\geq s$. Now $z<k$, and we showed earlier that  $k<\ell$. Hence $z\leq \ell-2$. Each southeast move is separated by at least $\ell-2$ east-moves on our cycle, and the $z$ moves we are examining are consecutive on our cycle, so we must have $a_2\leq 1$. Hence
\begin{align*}
\delta&= zs+a_1+2a_2\leq zs+z+1=z(s+1)+1\\
&\leq (k-1)(s+1)+1\\
&\leq (n-1)-(s+1)+1 \text{ by~\eqref{eqn:ks}}\\
&=n-1-s<n-s.
\end{align*}
But since $\pi(j)\equiv\pi(i)+\delta$ where $s\leq \delta\leq n-s$ we see that $||\pi(i),\pi(j)||_n\geq s$. We have shown that whenever $||j-i||_n=z<k$ we have $||\pi(i),\pi(j)||_n\geq s$. So $\pi$ is a $(k,s)$-clash-free permutation, and the theorem follows.~$\Box$

As an aside, we note that, in the notation of the proof above, the array $A$ is always wider than it is tall (in other words, $\ell>d$). This means that movements between rows are always at the right-hand side of the array. To prove this, first note that $n-k\leq k(s+1)\leq n-1$, as $s+1=\lfloor (n-1)/k\rfloor$. Write $k(s+1)=n-x$ with $1\leq x\leq k$. Since $d=\gcd(s+1,n)$, we see that $d$ divides $n$ and $k(s+1)=n-x$, and so $d$ divides $x$. In particular, $d\leq x\leq k$. But we have already shown, as part of the proof of Theorem~\ref{thm:MSconjecture1} above, that $k<\ell$ and so $d< \ell$ as required.

Theorem~\ref{thm:MSconjecture1} shows that 
\[
\sigma(n,k)\leq \lfloor (n-1)/k\rfloor-1+\varepsilon(n,k)
\]
for some integer $\varepsilon(n,k)\in\{0,1\}$ which depends on $n$ and $k$.  (The construction shows that $\varepsilon(n,k)\geq 0$.)
A very natural question is: For which values of $n$ and $k$ can we determine $\varepsilon(n,k)$? Mammoliti and Simpson show~\cite[Theorem~3.7]{MammolitiSimpson} that $\varepsilon(n,k)=1$ when $k|n$, when $\lfloor(n-1)/k\rfloor |n$, when $\gcd(n,k)=1$, or when $\gcd(n,\lfloor (n-1)/k\rfloor)=1$.  Moreover, they observe~\cite[Corollary~3.8]{MammolitiSimpson} that when there exist integers $k_1$ and $k_2$ such that:
\begin{gather*}
k_1<k_2<n\\
\lfloor(n-1)/k_1\rfloor=\lfloor(n-1)/k_2\rfloor, \text{ and}\\
\varepsilon(n,k_2)=1
\end{gather*}
then we can conclude that $\varepsilon(n,k_1)=1$. Using these results, together with computer searches in the cases $(n,k)=(18,4)$ and $(n,k)=(26,6)$, the authors determine $\varepsilon(n,k)$ for $n\leq 30$. Indeed, they show that $\varepsilon(18,4)=\varepsilon(26,4)=\varepsilon(26,6)=0$, and in all other cases with $n\leq 30$ we have $\varepsilon(n,k)=1$. Extending to $n\leq 40$, the open cases after using these results are $(n,k)\in\{(34,4),(34,8),(38,6), (39,6), (40,6)\}$.

In the justification for the construction in the proof of Theorem~\ref{thm:MSconjecture1}, the property of $s$ we need is that $(k-1)(s+1)\leq n-s$. So when
\[
(k-1)(\lfloor(n-1)/k\rfloor+1)+1\leq n-\lfloor(n-1)/k\rfloor
\]
we may set $s=\lfloor(n-1)/k\rfloor$ in the construction to produce a $(k,\lfloor(n-1)/k\rfloor)$-clash free permutation. This allows us to deduce that $\varepsilon(n,k)=1$ for such parameters. An example of this phenomenon is given in Figure~\ref{fig:B}. (The array $A$ is no longer necessarily wider than it is tall, as can be seen in this example.) However, it seems (at least for $n\leq 1000$) that no cases are resolved that are not already known from the results of Mammoliti and Simpson.

\begin{figure}
\centering
\begin{subfigure}{6cm}
\[
\left(
\begin{array}{ccccc}
0& 24& 48& 72& 96\\
 1& 25& 49& 73& 97\\
  2& 26& 50& 74& 98\\
   3& 27& 51&
75& 99\\
 4& 28& 52& 76& 100\\
  5& 29& 53& 77& 101\\
  6& 30
 & 54& 78& 102\\
7& 31& 55& 79& 103\\
 8& 32& 56& 80& 104\\
  9& 33& 57& 81& 105\\
   10& 34&58& 82& 106\\
    11& 35& 59& 83& 107\\
     12& 36& 60& 84& 108\\
 13& 37& 61&
85& 109\\
 14& 38& 62& 86& 110\\
  15& 39& 63& 87& 111\\
  16&
   40& 64& 88&
112\\
 17&41& 65& 89& 113\\
 18&
  42& 66& 90& 114\\
  19& 43& 67& 91& 115\\
20& 44& 68& 92& 116\\
21& 45& 69& 93& 117\\
22& 46& 70& 94& 118\\
23&47& 71& 95& 119
\end{array}
\right)
\]
\end{subfigure}
\begin{subfigure}{4cm}
\vspace{0.4cm}
\begin{tikzpicture}[fill=gray!50, scale=0.5]
\draw (-0.5,-0.5) -- (-0.5, 23.5);
\draw (0.5,-0.5) -- (0.5, 23.5);
\draw (1.5,-0.5) -- (1.5, 23.5);
\draw (2.5,-0.5) -- (2.5, 23.5);
\draw (3.5,-0.5) -- (3.5, 23.5);
\draw (4.5,-0.5) -- (4.5, 23.5);

\draw (-0.5,-0.5) -- (4.5,-0.5);
\draw (-0.5,0.5) -- (4.5,0.5);
\draw (-0.5,1.5) -- (4.5,1.5);
\draw (-0.5,2.5) -- (4.5,2.5);
\draw (-0.5,3.5) -- (4.5,3.5);
\draw (-0.5,4.5) -- (4.5,4.5);
\draw (-0.5,5.5) -- (4.5,5.5);
\draw (-0.5,6.5) -- (4.5,6.5);
\draw (-0.5,7.5) -- (4.5,7.5);
\draw (-0.5,8.5) -- (4.5,8.5);
\draw (-0.5,9.5) -- (4.5,9.5);
\draw (-0.5,10.5) -- (4.5,10.5);
\draw (-0.5,11.5) -- (4.5,11.5);
\draw (-0.5,12.5) -- (4.5,12.5);
\draw (-0.5,13.5) -- (4.5,13.5);
\draw (-0.5,14.5) -- (4.5,14.5);
\draw (-0.5,15.5) -- (4.5,15.5);
\draw (-0.5,16.5) -- (4.5,16.5);
\draw (-0.5,17.5) -- (4.5,17.5);
\draw (-0.5,18.5) -- (4.5,18.5);
\draw (-0.5,19.5) -- (4.5,19.5);
\draw (-0.5,20.5) -- (4.5,20.5);
\draw (-0.5,21.5) -- (4.5,21.5);
\draw (-0.5,22.5) -- (4.5,22.5);
\draw (-0.5,23.5) -- (4.5,23.5);


\ddeast{-1}{23} \east{0}{23} \east{1}{23} \east{2}{23} \east{3}{23} \dseast{4}{23}
\ddeast{-1}{22} \east{0}{22} \east{1}{22} \east{2}{22} \seast{3}{22} \dneast{4}{22}
\ddeast{-1}{21} \east{0}{21} \east{1}{21} \seast{2}{21} \neast{3}{21} \deast{4}{21}
\ddeast{-1}{20} \east{0}{20} \seast{1}{20} \neast{2}{20} \east{3}{20} \deast{4}{20}
\ddeast{-1}{19} \seast{0}{19} \neast{1}{19} \east{2}{19} \east{3}{19} \deast{4}{19}
\ddeast{-1}{18} \neast{0}{18} \east{1}{18} \east{2}{18} \east{3}{18} \dseast{4}{18}
\ddeast{-1}{17} \east{0}{17} \east{1}{17} \east{2}{17} \seast{3}{17} \dneast{4}{17}
\ddeast{-1}{16} \east{0}{16} \east{1}{16} \seast{2}{16} \neast{3}{16} \deast{4}{16}
\ddeast{-1}{15} \east{0}{15} \seast{1}{15} \neast{2}{15} \east{3}{15} \deast{4}{15}
\ddeast{-1}{14} \seast{0}{14} \neast{1}{14} \east{2}{14} \east{3}{14} \deast{4}{14}
\ddeast{-1}{13} \neast{0}{13} \east{1}{13} \east{2}{13} \east{3}{13} \dseast{4}{13}
\ddeast{-1}{12} \east{0}{12} \east{1}{12} \east{2}{12} \seast{3}{12} \dneast{4}{12}
\ddeast{-1}{11} \east{0}{11} \east{1}{11} \seast{2}{11} \neast{3}{11} \deast{4}{11}
\ddeast{-1}{10} \east{0}{10} \seast{1}{10} \neast{2}{10} \east{3}{10} \deast{4}{10}
\ddeast{-1}{9} \seast{0}{9} \neast{1}{9} \east{2}{9} \east{3}{9} \deast{4}{9}
\ddeast{-1}{8} \neast{0}{8} \east{1}{8} \east{2}{8} \east{3}{8} \dseast{4}{8}
\ddeast{-1}{7} \east{0}{7} \east{1}{7} \east{2}{7} \seast{3}{7} \dneast{4}{7}
\ddeast{-1}{6} \east{0}{6} \east{1}{6} \seast{2}{6} \neast{3}{6} \deast{4}{6}
\ddeast{-1}{5} \east{0}{5} \seast{1}{5} \neast{2}{5} \east{3}{5} \deast{4}{5}
\ddeast{-1}{4} \seast{0}{4} \neast{1}{4} \east{2}{4} \east{3}{4} \deast{4}{4}
\ddeast{-1}{3} \neast{0}{3} \east{1}{3} \east{2}{3} \east{3}{3} \dseast{4}{3}
\ddeast{-1}{2} \east{0}{2} \east{1}{2} \east{2}{2} \seast{3}{2} \dneast{4}{2}
\ddeast{-1}{1} \east{0}{1} \east{1}{1} \seast{2}{1} \neast{3}{1} \deast{4}{1}
\ddeast{-1}{0} \east{0}{0} \east{1}{0} \neast{2}{0} \east{3}{0} \deast{4}{0}

\end{tikzpicture}\end{subfigure}

\caption{$A$ in the case $(n,k,s)=(120,5,23)$, and the cyclic ordering of the entries of $A$ we use. So $(\pi(0),\pi(1),\ldots,\pi(n-1))$ is defined as $(0,24,48,72,96,1,25,49,73,98,2,26,50,75,99,3,27,\ldots,28,51,74,97)$.}
\label{fig:B}
\end{figure}
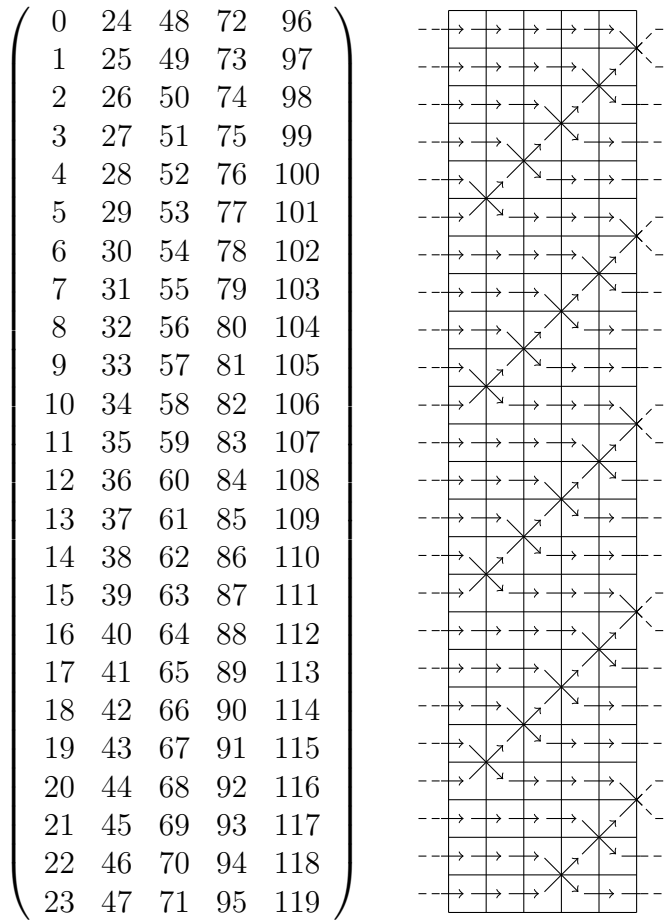

\section{A Proof of Theorem~\protect\ref{thm:MSconjecture2}}
\label{sec:proof2}

We use the following lemma, which follows from the fact that $X$ is a $(k,s,r)$-clash for a permutation $\pi$ if and only if $\pi(X)$ is an $(s,k,r)$-clash for $\pi^{-1}$.

\begin{lemma}
\label{lem:inverse}
A permutation $\pi$ of $\mathbb{Z}_n$ is $(k,s,r)$-clash-free if and only if $\pi^{-1}$ is $(s,k,r)$-clash-free.
\end{lemma}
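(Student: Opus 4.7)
The plan is to argue directly from the definitions: I would show that the map $X\mapsto \pi(X)$ is a cardinality-preserving bijection between $(r+1)$-subsets of $\mathbb{Z}_n$ that converts $(k,s,r)$-clashes of $\pi$ into $(s,k,r)$-clashes of $\pi^{-1}$, and vice versa. Since this is the precise hint given immediately before the lemma statement, the lemma is essentially bookkeeping; there is no real obstacle, and I certainly do not expect to need the explicit construction from Section~\ref{sec:proof1}.

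In detail, I would fix a permutation $\pi$ of $\mathbb{Z}_n$ and an $(r+1)$-subset $X\subseteq \mathbb{Z}_n$, and set $Y=\pi(X)$, noting that $|Y|=r+1$ because $\pi$ is a bijection and that $\pi^{-1}(Y)=X$. Unpacking the definition, $X$ is a $(k,s,r)$-clash for $\pi$ iff $\|X\|_n<k$ and $\|\pi(X)\|_n<s$, which in terms of $Y$ reads $\|\pi^{-1}(Y)\|_n<k$ and $\|Y\|_n<s$; this is exactly the condition that $Y$ be an $(s,k,r)$-clash for $\pi^{-1}$.

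Finally, since $X\mapsto \pi(X)$ is a bijection on the family of $(r+1)$-subsets of $\mathbb{Z}_n$, the existence of at least one $(k,s,r)$-clash for $\pi$ is equivalent to the existence of at least one $(s,k,r)$-clash for $\pi^{-1}$. Contrapositively, $\pi$ is $(k,s,r)$-clash-free iff $\pi^{-1}$ is $(s,k,r)$-clash-free, which is the lemma. The only thing to be careful about is to record that $\pi$ genuinely preserves both the cardinality of $X$ and the relation between $X$ and its image under $\pi^{\pm 1}$; no nontrivial property of $\|\cdot\|_n$ beyond its definition is needed.
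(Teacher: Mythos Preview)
Your proposal is correct and mirrors the paper's own justification exactly: the paper simply notes that the lemma follows from the fact that $X$ is a $(k,s,r)$-clash for $\pi$ if and only if $\pi(X)$ is an $(s,k,r)$-clash for $\pi^{-1}$, which is precisely the bijection-on-$(r+1)$-subsets argument you wrote out. No further work is needed.
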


We separate the proof of Theorem~\ref{thm:MSconjecture2} into two parts. We first prove Theorem~\ref{thm:construction} below, which constructs a permutation that is $(s,k,r)$-clash-free provided a certain inequality holds. We then prove Theorem~\ref{thm:MSconjecture2} by showing that this inequality holds in the situations we need.

\begin{theorem}
\label{thm:construction}
Let $r$, $s$, $k$ and $n$ be positive integers with $r<k< n$ and $s+1<n$. Let $d=\gcd(s+1,n)$ and let $\ell=n/d$. There exists an $(s,k,r)$-clash-free permutation of $\mathbb{Z}_n$ whenever
\begin{equation}
\label{eqn:construction_bound}
k(s+1)+d-3\leq rn-1.
\end{equation}
\end{theorem}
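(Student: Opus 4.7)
The plan is to reuse the construction from the proof of Theorem~\ref{thm:MSconjecture1} without change: define $A$ by $A_{i,j}=i+j(s+1)\bmod n$ and let $\pi$ be the permutation of $\mathbb{Z}_n$ obtained by listing the entries of $A$ in the cyclic order described there. By Lemma~\ref{lem:inverse}, it will be enough to show that this $\pi$ is $(k,s,r)$-clash-free, since then $\pi^{-1}$ will be the required $(s,k,r)$-clash-free permutation.

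The first step is to record a clean formula for $\pi$ along the traversal. Each move advances the column index of $A$ by $1$ modulo $\ell$, and since $d\mid (s+1)$ we have $\ell(s+1)\equiv 0\pmod n$, so column wraps contribute nothing modulo $n$. Hence, for any starting position $p_0$ and any $j\geq 0$,
\[
\pi(p_0+j)\equiv\pi(p_0)+j(s+1)+(i_{p_0+j}-i_{p_0})\pmod n,
\]
where $i_p$ denotes the row of $A$ visited at step $p$. I would then lift to $\mathbb{Z}$: pick integers $x_0, x_1, \ldots, x_{k-1}$ with $x_0=\pi(p_0)$ and each $x_{u+1}-x_u$ equal to the actual integer displacement $s$, $s+1$, or $s+2$ (for an NE, E, or SE move respectively). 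Then $x_u\equiv\pi(p_0+u)\pmod n$, and $(x_u)$ is a strictly increasing walk with every step at least $s$.

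Suppose for contradiction that $\pi$ has a $(k,s,r)$-clash: an $(r+1)$-subset $X$ sitting in a cyclic window $\{p_0,p_0+1,\ldots,p_0+k-1\}$ of $\mathbb{Z}_n$ with $\pi(X)$ contained in some cyclic interval $J\subseteq \mathbb{Z}_n$ of $s$ consecutive elements. Lifting $J$ to $\mathbb{Z}$ produces pairwise disjoint length-$s$ integer intervals $I_m=[y+mn,y+mn+s-1]$ (disjoint because $s<n$), and $\pi(p_0+u)\in J$ exactly when $x_u\in\bigcup_m I_m$. The crucial observation is that each $I_m$ contains at most one value $x_u$: two distinct walk values in the same $I_m$ would differ by at most $s-1$, contradicting the step lower bound $s$ together with strict monotonicity. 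Therefore the number of $u$ with $\pi(p_0+u)\in J$ is bounded by the number of $I_m$ meeting $[x_0,x_{k-1}]$, which is at most $\lfloor (T+s-1)/n\rfloor +1$ where $T=x_{k-1}-x_0$.

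To conclude, write $T=(k-1)(s+1)+\sigma-\nu$ with $\sigma$ and $\nu$ counting SE and NE moves in the window, and use $\sigma\leq d-1$ (the total number of SE moves in the whole cycle) and $\nu\geq 0$ to obtain $T+s\leq k(s+1)+d-2\leq rn$ by hypothesis~\eqref{eqn:construction_bound}. This forces the count to be at most $r$, contradicting $|X|=r+1$. The main technical care I anticipate will be in the bookkeeping for the lifted walk when the window crosses the wrap of the traversal, so that the single observation ``every step is at least $s$'' applies uniformly and the elementary arithmetic chain delivers precisely~\eqref{eqn:construction_bound}.
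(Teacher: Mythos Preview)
Your proposal is correct and follows essentially the same route as the paper: both use the identical permutation $\pi$ from the $r=1$ case, lift the $k$ consecutive values to an integer walk with steps in $\{s,s+1,s+2\}$, bound the total displacement by $(k-1)(s+1)+d-1$ via the fact that the whole cycle contains only $d-1$ southeast moves, and then use~\eqref{eqn:construction_bound} to cap the overlap count at $r$. The only cosmetic difference is that the paper phrases the final count as ``the disjoint lifted intervals $\sigma_i+[0,s-1]$ lie in $[0,rn-1]$ and reduction mod $n$ is $r$-to-one,'' whereas you phrase it dually as ``at most $r$ translates $I_m$ of the target window meet $[x_0,x_{k-1}]$''; the arithmetic is identical.
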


\begin{proof} By Lemma~\ref{lem:inverse}, just as in Section~\ref{sec:proof1}, we choose to construct a $(k,s,r)$-clash-free permutation $\pi$ of $\mathbb{Z}_n$.

Since $d\leq s+1<n$, we see that $\ell=n/d\geq 2$.
We construct the permutation $\pi$ exactly as in Section~\ref{sec:proof1}. For $i\in\mathbb{Z}_n$, we have $\pi(i+1)\equiv \pi(i)+\delta_i$, where $\delta_i\in\mathbb{Z}$ is equal to one of $s$, $s+1$ or $s+2$, depending on whether we have moved north-east, east, or southeast when moving from position $i$ to position $i+1$ in our cycle.

It remains to prove that $\pi$ is $(k,s,r$)-clash free. Suppose, for a contradiction, that $\pi$ has a $(k,s,r$)-clash. So there exists an $(r+1)$-subset $X\subset\mathbb{Z}_n$ such that $||X||_n<k$ and $||\pi(X)||_r<s$. Let $x\in\mathbb{Z}_n$ be such that $X\subseteq x+[0,k-1]\bmod n$, 
and let $y\in\mathbb{Z}_n$ be such that $\pi(X)\subseteq y+[0,s-1]\bmod n$. In particular, there exists an element of $\mathbb{Z}_n$  (namely the element $y+s-1$) that can be written in at least $r+1$ ways in the form $\pi(i)+j$ with $i\in X$ and $j\in [0,s-1]$. Since $X\subseteq x+[0,k-1]$, this element of $\mathbb{Z}_n$ can be written in at least $r+1$ ways in the form $\pi(x+i)+j$ with $i\in [0,k-1]$ and $j\in [0,s-1]$. We derive our contradiction by showing that this cannot happen.

Define integers $\sigma_0,\sigma_1,\ldots,\sigma_{k-1}$ by $\sigma_0=0$ and $\sigma_{i+1}=\sigma_i+\delta_{x+i}$ for $i\in\{0,1,\ldots,k-2\}$. So $\pi(x+i)= \pi(x)+\sigma_i\bmod n$. To prove the theorem, it is sufficient to show that the sets $\pi(x)+\sigma_i+[0,s-1]\bmod n$ (with $i=0,1,\ldots k-1$) cover each element of $\mathbb{Z}_n$ no more than $r$ times. This is equivalent to showing that the sets $\sigma_i+[0,s-1]\bmod n$ cover each element of $\mathbb{Z}_n$ no more than $r$ times.

The subsets $\sigma_i+[0,s-1]\subseteq\mathbb{Z}$ are pairwise disjoint, since $\delta_h\geq s$ for all $h\in\mathbb{Z}_n$. When moving from position $x$ to position $x+k-1$ along our cycle, we have moved northeast $a_0$ times, east $a_1$ times and southeast $a_2$ times for some non-negative integers $a_0$, $a_1$ and $a_2$ with $a_0+a_1+a_2=k-1$. Note that $k\leq n$ and we move southeast at most $d-1$ times in our cycle, and so $a_2-a_0\leq a_2\leq d-1$. Thus
\[
\sigma_{k-1}=a_0s+a_1(s+1)+a_2(s+2)=(k-1)(s+1)+a_2-a_0\leq (k-1)(s+1)+d-1.
\]
Thus the disjoint sets $\sigma_i+[0,s-1]$ are contained in the integer interval $[0,t]$ where
\begin{align*}
t&=\sigma_{k-1}+s-1\\
&\leq (k-1)(s+1)+d-1 +s-1\\
&=k(s+1)+d-3.
\end{align*}
Now $t\leq rn-1$, by~\eqref{eqn:construction_bound}, and the map from the integer interval $[0,rn-1]$ to $\mathbb{Z}_n$ given by $z\mapsto (z\bmod n)$ is $r$-to-one. Thus the sets $\sigma_i+[0,s-1]\bmod n$ cover each element of $\mathbb{Z}_n$ no more than $r$ times. So we have our contradiction, as required.
\end{proof}

\begin{proof}[Proof of Theorem~\ref{thm:MSconjecture2}]
We have $\sigma(n,k,r)\leq \lfloor (rn-1)/k\rfloor$ by~\cite[Lemma~4.1]{MammolitiSimpson}. So in order to prove Theorem~\ref{thm:MSconjecture2}, it suffices to show that an $(s,k,r)$-clash-free permutation of $\mathbb{Z}_n$ exists with $s=\lfloor (rn-1)/k\rfloor-1$. Define $d=\gcd(s+1,n)$ and $\ell=n/d$. 

By the quotient and remainder theorem, we may write $rn-1=qk+\rho$ where $q$ is a non-negative integer and $0\leq \rho<k$. We see that $s+1=\lfloor (rn-1)/k\rfloor=q$. Hence
\begin{align*}
k(s+1)+d-3 &=qk+\gcd(n,q)-3\\
&\leq qk+\gcd(rn,q)-3\\
&=qk+\gcd(qk+\rho+1,q)-3\\
&=qk+\gcd(\rho+1,q)-3\\
&\leq qk+\rho+1-3\\
&=rn-1-2\\
&<rn-1.
\end{align*}
The theorem follows, by Theorem~\ref{thm:construction}.
\end{proof}

We note that the construction here sometimes determines the value of $\sigma(n,k,r)$ exactly. For example, when $r=2$ and
\[
(n,k)\in\{(7,3),(10,3),(12,5),(13,3),(16,3),(17,5),(17,7),(19,3)\}
\]
the conditions of Theorem~\ref{thm:construction} are satisfied with $s=\lfloor (rn-1)/k\rfloor$, hence $\sigma(n,k,2)=\lfloor (rn-1)/k\rfloor$ in these cases. It would be very interesting if the exact value of $\sigma(n,k,r)$ could be determined in all cases.

\paragraph{Acknowledgements} The author is grateful to the reviewers for their useful and perceptive comments, which have significantly improved the paper.

\end{document}